\documentclass{amsart}
\usepackage{amssymb}
\usepackage{amsfonts}

\setcounter{MaxMatrixCols}{10}

\newtheorem{theorem}{Theorem}
\theoremstyle{plain}

\newtheorem{corollary}{Corollary}

\newtheorem{lemma}{Lemma}

\newtheorem{remark}{Remark}

\numberwithin{equation}{section}
\input{tcilatex}

\begin{document}
\title[Inequalities For $(\alpha ,m)-$ Convex functions]{Hermite-Hadamard
Type Inequalities via $(\alpha ,m)-$ Convexity}
\author{M. Emin \"{O}zdemir$^{\diamondsuit }$}
\address{$^{\diamondsuit }$ATAT\"{U}RK UNIVERSITY, K.K EDUCATION FACULTY,
DEPARTMENT OF MATHEMATICS, 25240, CAMPUS, ERZURUM, TURKEY}
\email{emos@atauni.edu.tr}
\author{Merve Avc\i $^{\diamondsuit ,\bigstar }$}
\email{merveavci@ymail.com}
\thanks{$^{\bigstar }$Corresponding Author}
\author{Havva Kavurmac\i $^{\diamondsuit }$}
\email{havva.kvrmc@yahoo.com}
\keywords{Convexity, Hermite-Hadamard inequality, H\"{o}lder's integral
inequality, Power-mean integral inequality}

\begin{abstract}
In this paper, we establish some integral inequalities for functions whose
second derivatives in absolute value are $\left( \alpha ,m\right) -$ convex.
\end{abstract}

\maketitle

\section{INTRODUCTION}

The following inequality is well known in the literature as the
Hermite-Hadamard integral inequality:%
\begin{equation*}
f\left( \frac{a+b}{2}\right) \leq \frac{1}{b-a}\int_{a}^{b}f(x)dx\leq \frac{%
f(a)+f(b)}{2}
\end{equation*}
where $f:I\subseteq 
\mathbb{R}
\rightarrow 
\mathbb{R}
$ is a convex function on the interval $I$ of real numbers and $a,b\in I$
with $a<b.$

In \cite{M}, V.G. Mihe\c{s}an introduced the class of $(\alpha ,m)-$convex
functions as the following:

The function $f:[0,b]\rightarrow 
\mathbb{R}
$ is said to be $(\alpha ,m)-$convex, where $(\alpha ,m)\in \lbrack
0,1]^{2}, $ if for every $x,y\in \lbrack 0,b]$ and $t\in \lbrack 0,1]$ we
have%
\begin{equation*}
f(tx+m(1-t)y)\leq t^{\alpha }f(x)+m(1-t^{\alpha })f(y).
\end{equation*}

Note that for $(\alpha ,m)\in \left\{ \left( 0,0\right) ,\left( \alpha
,0\right) ,\left( 1,0\right) ,\left( 1,m\right) ,\left( 1,1\right) ,\left(
\alpha ,1\right) \right\} $ one obtains the following classes of functions:
increasing, $\alpha -$starshaped, starshaped, $m-$convex, convex and $\alpha
-$convex.

Denote by $K_{m}^{\alpha }(b)$ the set of all $(\alpha ,m)-$convex functions
on $[0,b]$ which $f(0)\leq 0.$ For recent results and generalizations
concerning $m-$convex and $\left( \alpha ,m\right) -$ convex functions see 
\cite{BOP}, \cite{BPR} and \cite{SSOR}.

In \cite{OAS}, M. Emin \"{O}zdemir, Merve Avci and Erhan Set used the
following lemma in order to establish some inequalities for $m-$ convex
functions.

\begin{lemma}
\label{lemma 1.1} Let $f:I\subset 
\mathbb{R}
\rightarrow 
\mathbb{R}
$ be a twice differentiable mapping on $I^{\circ },a,b\in I$ with $a<b$ and $%
f^{\prime \prime }\in L[a,b].$ Then the following equality holds:%
\begin{equation*}
\frac{f(a)+f(b)}{2}-\frac{1}{b-a}\int_{a}^{b}f(x)dx=\frac{\left( b-a\right)
^{2}}{2}\int_{0}^{1}t\left( 1-t\right) f^{\prime \prime }(ta+(1-t)b)dt.
\end{equation*}
\end{lemma}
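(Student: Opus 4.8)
The plan is to verify the identity by starting from the right-hand side and integrating by parts twice, then rewriting the surviving integral through a linear substitution. Throughout I will use that, by the chain rule, $\frac{d}{dt}f'(ta+(1-t)b) = (a-b)f''(ta+(1-t)b)$, so that an antiderivative of $f''(ta+(1-t)b)$ in the variable $t$ is $\frac{1}{a-b}f'(ta+(1-t)b)$. The factor $(a-b)$ produced at each such step is precisely what will ultimately account for the $(b-a)^2$ weight appearing on the left.

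First I would write $I = \int_0^1 t(1-t) f''(ta+(1-t)b)\,dt$ and integrate by parts with $u = t(1-t)$ and $dv = f''(ta+(1-t)b)\,dt$. Since $t(1-t)$ vanishes at both $t=0$ and $t=1$, the boundary term disappears entirely, leaving $I = -\frac{1}{a-b}\int_0^1 (1-2t) f'(ta+(1-t)b)\,dt$. I would then integrate this new integral by parts again, now taking $u = 1-2t$ and $dv = f'(ta+(1-t)b)\,dt$. This time the boundary terms do \emph{not} vanish: evaluating $\frac{1-2t}{a-b}f(ta+(1-t)b)$ at the two endpoints yields exactly $-\frac{f(a)+f(b)}{a-b}$, while the remaining integral contributes $\frac{2}{a-b}\int_0^1 f(ta+(1-t)b)\,dt$.

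Finally I would apply the substitution $x = ta+(1-t)b$, noting that it reverses orientation ($t=0 \mapsto x=b$ and $t=1\mapsto x=a$) and that $dx = (a-b)\,dt$; this converts $\int_0^1 f(ta+(1-t)b)\,dt$ into $\frac{1}{b-a}\int_a^b f(x)\,dx$. Collecting the pieces, using $(a-b)^2 = (b-a)^2$, and multiplying through by $\frac{(b-a)^2}{2}$ produces the claimed equality.

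The computation is otherwise routine, so the only genuine hazard is sign bookkeeping: one must track the $(a-b)$ factors generated by the chain rule at each integration by parts, handle the orientation reversal in the final substitution correctly, and keep in mind the structural contrast that the first integration by parts annihilates its boundary term whereas the second is exactly where the endpoint values $f(a)$ and $f(b)$ enter. No convexity is used anywhere; this is a pure integration identity valid for any twice-integrable $f$.
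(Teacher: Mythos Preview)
Your proposal is correct. The paper does not actually prove Lemma~\ref{lemma 1.1} (it is quoted from \cite{OAS}); for the closely related Lemma~\ref{lemma 2.1} the authors merely remark that the equality follows by ``performing an integration by parts in the integrals from the right side and changing the variable,'' which is precisely the route you take.
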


In the same paper \cite{OAS}, \"{O}zdemir et al. discussed the following new
results connecting with $m-$convex functions:

\begin{theorem}
\label{teo 1.1} Let $f:I^{\circ }\rightarrow 
\mathbb{R}
,$ where $I^{\circ }\subset \lbrack 0,\infty )$ be a twice differentiable
function on $I^{\circ },a,b\in I$ with $a<b$ and suppose that $f^{\prime
\prime }\in L[a,b].$ If $\left\vert f^{\prime \prime }\right\vert ^{q}$ is $%
m-$ convex on $[a,b]$ for some fixed $q>1$ and $m\in (0,1]$ then the
following inequality holds:%
\begin{equation*}
\frac{f(a)+f(b)}{2}-\frac{1}{b-a}\int_{a}^{b}f(x)dx\leq \frac{\left(
b-a\right) ^{2}}{8}\left( \frac{\Gamma \left( 1+p\right) }{\Gamma \left( 
\frac{3}{2}+p\right) }\right) ^{\frac{1}{p}}\left( \frac{\left\vert
f^{\prime \prime }(a)\right\vert ^{q}+m\left\vert f^{\prime \prime }(\frac{b%
}{m})\right\vert ^{q}}{2}\right) ^{\frac{1}{q}}
\end{equation*}%
where $p=\frac{q}{q-1}.$
\end{theorem}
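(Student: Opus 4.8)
The plan is to start from the identity in Lemma \ref{lemma 1.1}, pass to absolute values, and then estimate the resulting integral by Hölder's integral inequality followed by the $m$-convexity hypothesis on $\left\vert f^{\prime \prime }\right\vert ^{q}$. From Lemma \ref{lemma 1.1} one immediately obtains
\begin{equation*}
\left\vert \frac{f(a)+f(b)}{2}-\frac{1}{b-a}\int_{a}^{b}f(x)\,dx\right\vert \leq \frac{(b-a)^{2}}{2}\int_{0}^{1}t(1-t)\left\vert f^{\prime \prime }(ta+(1-t)b)\right\vert \,dt,
\end{equation*}
so the stated (signed) inequality will follow once the right-hand side is bounded. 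First I would apply Hölder's inequality with the conjugate exponents $p=\frac{q}{q-1}$ and $q$, placing the whole weight $t(1-t)$ in the $p$-factor:
\begin{equation*}
\int_{0}^{1}t(1-t)\left\vert f^{\prime \prime }(ta+(1-t)b)\right\vert \,dt\leq \left( \int_{0}^{1}\left( t(1-t)\right) ^{p}\,dt\right) ^{1/p}\left( \int_{0}^{1}\left\vert f^{\prime \prime }(ta+(1-t)b)\right\vert ^{q}\,dt\right) ^{1/q}.
\end{equation*}

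Next I would evaluate the first factor as a Beta integral, $\int_{0}^{1}\left( t(1-t)\right) ^{p}\,dt=\frac{\Gamma (p+1)^{2}}{\Gamma (2p+2)}$. Using the symmetry $t\mapsto 1-t$ together with the substitution $t=\frac{1}{2}(1+\sin \theta )$ (equivalently the Legendre duplication formula) this reduces to $\frac{\sqrt{\pi }\,\Gamma (1+p)}{2^{2p+1}\Gamma (\frac{3}{2}+p)}$. Its $p$-th root contributes the factor $\frac{1}{4}\left( \frac{\Gamma (1+p)}{\Gamma (\frac{3}{2}+p)}\right) ^{1/p}$ together with a residual constant $\left( \frac{\sqrt{\pi }}{2}\right) ^{1/p}\leq 1$; discarding the latter only enlarges the bound, and combining with the prefactor $\frac{(b-a)^{2}}{2}$ produces exactly the announced coefficient $\frac{(b-a)^{2}}{8}\left( \frac{\Gamma (1+p)}{\Gamma (\frac{3}{2}+p)}\right) ^{1/p}$.

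For the second factor I would invoke the $m$-convexity of $\left\vert f^{\prime \prime }\right\vert ^{q}$, i.e.\ the case $\alpha =1$ of the defining inequality. The key maneuver is to rewrite the argument so that the definition applies: $ta+(1-t)b=ta+m(1-t)\frac{b}{m}$, whence
\begin{equation*}
\left\vert f^{\prime \prime }(ta+(1-t)b)\right\vert ^{q}\leq t\left\vert f^{\prime \prime }(a)\right\vert ^{q}+m(1-t)\left\vert f^{\prime \prime }\left( \tfrac{b}{m}\right) \right\vert ^{q}.
\end{equation*}
Integrating termwise and using $\int_{0}^{1}t\,dt=\int_{0}^{1}(1-t)\,dt=\frac{1}{2}$ gives $\frac{\left\vert f^{\prime \prime }(a)\right\vert ^{q}+m\left\vert f^{\prime \prime }(\frac{b}{m})\right\vert ^{q}}{2}$, whose $q$-th root is precisely the last factor in the claim. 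Assembling the three pieces yields the inequality.

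The step I expect to be the main obstacle is the exact evaluation and simplification of $\int_{0}^{1}\left( t(1-t)\right) ^{p}\,dt$ into the half-integer Gamma ratio $\frac{\Gamma (1+p)}{\Gamma (\frac{3}{2}+p)}$: the reduction hinges on the duplication formula, and one must track the numerical constants carefully, noting that the harmless factor $\left( \frac{\sqrt{\pi }}{2}\right) ^{1/p}\leq 1$ is absorbed in order to present the clean coefficient $\frac{1}{8}$. By contrast, the Hölder estimate and the $m$-convexity bound are routine once the endpoint $b$ is written as $m\cdot \frac{b}{m}$ so that the defining inequality for $(\alpha ,m)$-convexity can be used.
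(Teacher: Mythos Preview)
Your proposal is correct and follows essentially the same route as the paper: start from the integral identity of Lemma~\ref{lemma 1.1}, apply H\"older's inequality with the full weight $t(1-t)$ in the $p$-factor, evaluate $\int_0^1 (t(1-t))^p\,dt=B(p+1,p+1)=\dfrac{\sqrt{\pi}\,\Gamma(1+p)}{2^{2p+1}\Gamma(\frac32+p)}$ via the duplication formula, absorb the harmless factor $(\sqrt{\pi}/2)^{1/p}\le 1$, and bound the $q$-factor using $m$-convexity after writing $b=m\cdot\frac{b}{m}$. This is exactly the argument the paper carries out (in the more general $(\alpha,m)$-setting) in the proof of Theorem~\ref{teo 2.2}.
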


\begin{corollary}
\label{co 1.1} With the above assumptions given that $\left\vert f^{\prime
\prime }(x)\right\vert \leq K$ on $[a,b],$ and $0<m\leq 1,$ we have the
inequality%
\begin{equation*}
\frac{f(a)+f(b)}{2}-\frac{1}{b-a}\int_{a}^{b}f(x)dx\leq K\frac{\left(
b-a\right) ^{2}}{8}\left( \frac{1+m}{2}\right) ^{\frac{1}{q}}\left( \frac{%
\Gamma \left( 1+p\right) }{\Gamma \left( \frac{3}{2}+p\right) }\right) ^{%
\frac{1}{p}}.
\end{equation*}
\end{corollary}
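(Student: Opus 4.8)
The plan is to obtain this directly from Theorem \ref{teo 1.1}, since the corollary is nothing more than the specialization of that theorem under the uniform pointwise bound $\left\vert f^{\prime \prime }\right\vert \leq K$. First I would invoke the conclusion of Theorem \ref{teo 1.1}, which, under exactly the hypotheses now in force (with the same fixed $q>1$ and $p=\frac{q}{q-1}$), already supplies
\begin{equation*}
\frac{f(a)+f(b)}{2}-\frac{1}{b-a}\int_{a}^{b}f(x)dx\leq \frac{\left( b-a\right) ^{2}}{8}\left( \frac{\Gamma \left( 1+p\right) }{\Gamma \left( \frac{3}{2}+p\right) }\right) ^{\frac{1}{p}}\left( \frac{\left\vert f^{\prime \prime }(a)\right\vert ^{q}+m\left\vert f^{\prime \prime }(\frac{b}{m})\right\vert ^{q}}{2}\right) ^{\frac{1}{q}}.
\end{equation*}
So the only work remaining is to control the last factor using the constant $K$.

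Next I would estimate that factor. From $\left\vert f^{\prime \prime }(a)\right\vert \leq K$ and $\left\vert f^{\prime \prime }(\tfrac{b}{m})\right\vert \leq K$ we get $\left\vert f^{\prime \prime }(a)\right\vert ^{q}\leq K^{q}$ and $\left\vert f^{\prime \prime }(\tfrac{b}{m})\right\vert ^{q}\leq K^{q}$, whence
\begin{equation*}
\frac{\left\vert f^{\prime \prime }(a)\right\vert ^{q}+m\left\vert f^{\prime \prime }(\tfrac{b}{m})\right\vert ^{q}}{2}\leq \frac{K^{q}+mK^{q}}{2}=K^{q}\left( \frac{1+m}{2}\right).
\end{equation*}
Raising both sides to the power $\tfrac{1}{q}$ replaces the bracketed quantity by $K\left( \tfrac{1+m}{2}\right) ^{1/q}$, and substituting this back into the displayed inequality from Theorem \ref{teo 1.1} — leaving the $\Gamma$-function factor and the $\tfrac{(b-a)^{2}}{8}$ prefactor untouched — yields precisely the claimed estimate.

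There is essentially no obstacle here, as the corollary is a one-line consequence once the theorem is in hand; the monotonicity of $t\mapsto t^{1/q}$ for $q>1$ is all that is invoked. The single point deserving a word of care is that the bound $\left\vert f^{\prime \prime }\right\vert \leq K$ is applied at the point $\tfrac{b}{m}$, which for $m\in (0,1]$ satisfies $\tfrac{b}{m}\geq b$ and therefore may lie outside $[a,b]$. This is harmless, however, since $f^{\prime \prime }(\tfrac{b}{m})$ already appears explicitly in the conclusion of Theorem \ref{teo 1.1}; the hypothesis of the corollary is simply to be read as asserting the bound at each point at which $f^{\prime \prime }$ is evaluated, consistently with the $(\alpha ,m)$-convexity framework in which the argument $\tfrac{b}{m}$ naturally arises. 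No integration or further estimation is required.
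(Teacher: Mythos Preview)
Your proposal is correct and is exactly the intended derivation: the corollary is stated in the paper as a result quoted from \cite{OAS} without proof, and the natural (and only) way to obtain it is to plug the uniform bound $\lvert f''\rvert\le K$ into the right-hand side of Theorem~\ref{teo 1.1}, just as you do. Your remark about the point $b/m$ possibly lying outside $[a,b]$ is a fair caveat about the phrasing of the hypothesis, but it does not affect the argument.
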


In \cite{SA}, Sar\i kaya and Aktan obtained the following result concerning
Hermite-Hadamard's inequality for functions whose second derivative in
absolute value is convex as follows:

\begin{theorem}
\label{teo 1.2}Let $I\subseteq 
\mathbb{R}
$ be an open interval, $a,b\in I$ with $a<b$ and $f:I\rightarrow 
\mathbb{R}
$ be twice differentiable mapping such that $f^{\prime \prime }$ is
integrable and $0\leq \lambda \leq 1.$ If $\left\vert f^{\prime \prime
}\right\vert $ is a convex function on $[a,b],$ then the following
inequalities hold:%
\begin{eqnarray*}
&&\left\vert \left( \lambda -1\right) f\left( \frac{a+b}{2}\right) -\lambda 
\frac{f(a)+f(b)}{2}+\frac{1}{b-a}\int_{a}^{b}f(x)dx\right\vert \\
&\leq &\left\{ 
\begin{array}{l}
\frac{\left( b-a\right) ^{2}}{12}[\left( \lambda ^{4}+\left( 1+\lambda
\right) \left( 1-\lambda \right) ^{3}+\frac{5\lambda -3}{4}\right)
\left\vert f^{\prime \prime }(a)\right\vert \text{ \ \ \ for }0\leq \lambda
\leq \frac{1}{2} \\ 
\\ 
+\left( \lambda ^{4}+\left( 2-\lambda \right) \lambda ^{3}+\frac{1-3\lambda 
}{4}\right) \left\vert f^{\prime \prime }(b)\right\vert ], \\ 
\\ 
\frac{\left( b-a\right) ^{2}\left( 3\lambda -1\right) }{48}\left[ \left\vert
f^{\prime \prime }(a)\right\vert +\left\vert f^{\prime \prime
}(b)\right\vert \text{ }\right] ,\text{ \ \ \ \ \ \ \ \ \ \ \ \ \ \ \ \ \ \
\ \ \ for }\frac{1}{2}\leq \lambda \leq 1.%
\end{array}%
\right.
\end{eqnarray*}
\end{theorem}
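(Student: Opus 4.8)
The plan is to reduce the claimed inequality to a single kernel identity and then to bound the resulting integral using the convexity of $\left\vert f^{\prime \prime }\right\vert $, with the case split emerging naturally from a sign analysis of the kernel. First I would establish the enabling identity
\begin{equation*}
\left( \lambda -1\right) f\left( \tfrac{a+b}{2}\right) -\lambda \tfrac{
f(a)+f(b)}{2}+\tfrac{1}{b-a}\int_{a}^{b}f(x)dx=\left( b-a\right)
^{2}\int_{0}^{1}p(t)f^{\prime \prime }(ta+(1-t)b)\,dt,
\end{equation*}
where the kernel is the continuous piecewise quadratic
\begin{equation*}
p(t)=\left\{
\begin{array}{ll}
\tfrac{1}{2}t\left( t-\lambda \right) , & 0\leq t\leq \tfrac{1}{2}, \\
\tfrac{1}{2}\left( t-1\right) \left( t-\left( 1-\lambda \right) \right) , &
\tfrac{1}{2}\leq t\leq 1.
\end{array}
\right.
\end{equation*}
To derive this, substitute $x=ta+(1-t)b$ to turn the integral mean into $
\int_{0}^{1}f(ta+(1-t)b)dt$, put $g(t)=f(ta+(1-t)b)$ so that $g^{\prime
\prime }(t)=(b-a)^{2}f^{\prime \prime }(ta+(1-t)b)$, and integrate $
\int_{0}^{1}p(t)g^{\prime \prime }(t)dt$ by parts twice. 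The coefficients of $
p$ are forced by requiring $p^{\prime \prime }=1$ on each subinterval (to
reproduce $\int_{0}^{1}g\,dt$), $p(0)=p(1)=0$ (to kill the $f^{\prime }$
boundary terms), $p^{\prime }(0)=-p^{\prime }(1)=-\lambda /2$ (to produce the
$-\lambda (f(a)+f(b))/2$ terms), and a jump $p^{\prime }(\tfrac{1}{2}
^{+})-p^{\prime }(\tfrac{1}{2}^{-})=\lambda -1$ in the derivative at $t=\tfrac{
1}{2}$ (to produce the $(\lambda -1)f((a+b)/2)$ term); continuity of $p$ at $
\tfrac{1}{2}$ then holds automatically.

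Next I take absolute values, apply the triangle inequality for integrals,
and invoke the convexity of $\left\vert f^{\prime \prime }\right\vert $ in the
form $\left\vert f^{\prime \prime }(ta+(1-t)b)\right\vert \leq t\left\vert
f^{\prime \prime }(a)\right\vert +(1-t)\left\vert f^{\prime \prime
}(b)\right\vert $, which reduces the right-hand side to
\begin{equation*}
\left( b-a\right) ^{2}\left[ \left\vert f^{\prime \prime }(a)\right\vert
\int_{0}^{1}\left\vert p(t)\right\vert t\,dt+\left\vert f^{\prime \prime
}(b)\right\vert \int_{0}^{1}\left\vert p(t)\right\vert \left( 1-t\right) dt
\right] .
\end{equation*}

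The remaining work, and the main obstacle, is evaluating these two weighted
integrals of $\left\vert p(t)\right\vert $, which is exactly what forces the
case distinction. When $\tfrac{1}{2}\leq \lambda \leq 1$, both displayed
factors of $p$ keep a fixed sign, so $p\leq 0$ throughout $[0,1]$; moreover
the symmetry $p(1-t)=p(t)$ makes the two integrals equal, each reducing to $
-\tfrac{1}{2}\int_{0}^{1}p\,dt=\tfrac{3\lambda -1}{48}$, and yielding the
clean symmetric bound $\tfrac{(b-a)^{2}(3\lambda -1)}{48}[\left\vert
f^{\prime \prime }(a)\right\vert +\left\vert f^{\prime \prime
}(b)\right\vert ]$ of the second branch. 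When $0\leq \lambda \leq \tfrac{1}{2}
$, however, $p$ changes sign at $t=\lambda $ and at $t=1-\lambda $, so each
weighted integral must be split into three pieces across these roots;
integrating the resulting quadratics against the weights $t$ and $1-t$ and
collecting terms produces the quartic coefficients in $\lambda $ recorded in
the first branch. The sign bookkeeping at the two roots is where the
computation is most error-prone, but it is entirely mechanical once the
kernel $p$ and its roots are in hand.
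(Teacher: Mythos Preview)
Your argument is sound: the piecewise-quadratic kernel identity is correct (the boundary data $p(0)=p(1)=0$, $p'(0)=-\lambda/2$, $p'(1)=\lambda/2$, jump $\lambda-1$ at $t=\tfrac12$, and $p''\equiv 1$ on each piece do exactly reproduce the left-hand side after two integrations by parts), the convexity bound is applied in the standard way, and the sign analysis of $p$ correctly explains the case split. For $\lambda\ge\tfrac12$ your symmetry observation $p(1-t)=p(t)$ together with $p\le 0$ gives each weighted integral as $-\tfrac12\int_0^1 p=\tfrac{3\lambda-1}{48}$, matching the second branch; for $\lambda<\tfrac12$ the splits at $t=\lambda$ and $t=1-\lambda$ are exactly what is needed, and the resulting quartic expressions in the first branch do come out (in fact your symmetry argument shows the two coefficients must be equal, even though the statement records them in different algebraic forms).

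There is, however, nothing in the present paper to compare your proof against: Theorem~\ref{teo 1.2} is \emph{quoted} here from Sar\i kaya and Aktan~\cite{SA} as background, not proved. The paper's own contributions are the $(\alpha,m)$-convex results in Section~2, all of which rest on the simpler single-piece kernel of Lemma~\ref{lemma 2.1} rather than on the two-piece $\lambda$-dependent kernel you construct. Your kernel-and-sign-analysis strategy is precisely the method used in~\cite{SA}, so while it is correct and well organized, it is not an alternative route but the standard one for this result.
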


In Theorem \ref{teo 1.2}, if we choose $\lambda =1$ we have%
\begin{equation}
\left\vert \frac{1}{b-a}\int_{a}^{b}f(x)dx-\frac{f(a)+f(b)}{2}\right\vert
\leq \frac{\left( b-a\right) ^{2}}{12}\left[ \frac{\left\vert f^{\prime
\prime }(a)\right\vert +\left\vert f^{\prime \prime }(b)\right\vert }{2}%
\right] .  \label{0}
\end{equation}

The aim of this paper is to establish some inequalities like those given in 
\cite{OAS}, but now for $(\alpha ,m)-$convex functions. That is, this study
is a continuation of \cite{OAS}. In order to obtain our results, we modified
Lemma \ref{lemma 1.1} given in the \cite{OAS}$.$

\section{INEQUALITIES FOR $\left( \protect\alpha ,m\right) -$ CONVEX
FUNCTIONS}

\begin{lemma}
\label{lemma 2.1} Let $f:I\subseteq 
\mathbb{R}
\rightarrow 
\mathbb{R}
$ be a twice differentiable mapping on $I^{\circ }$ where $a,b\in I$ with $%
a<b$ and $m\in (0,1].$ If $f^{\prime \prime }\in L[a,b],$ then the following
equality holds:%
\begin{equation*}
\frac{f(a)+f(mb)}{2}-\frac{1}{mb-a}\int_{a}^{mb}f(x)dx=\frac{\left(
mb-a\right) ^{2}}{2}\int_{0}^{1}\left( t-t^{2}\right) f^{\prime \prime
}(ta+m(1-t)b)dt.
\end{equation*}
\end{lemma}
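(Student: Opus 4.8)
The plan is to work backwards from the right-hand side, reducing the integral $\int_{0}^{1}(t-t^{2})f^{\prime \prime }(ta+m(1-t)b)\,dt$ by two successive integrations by parts, exactly in the spirit of the proof of Lemma \ref{lemma 1.1}, but now carrying the shifted argument $ta+m(1-t)b$ through every step. Throughout I would write $\phi (t)=ta+m(1-t)b=mb+t(a-mb)$, so that $\phi ^{\prime }(t)=a-mb$. The algebraic fact that drives everything is the chain-rule identity $\frac{d}{dt}f^{\prime }(\phi (t))=(a-mb)f^{\prime \prime }(\phi (t))$, which lets me trade a derivative on $f$ against a factor of $a-mb$ at each stage.

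First I would rewrite $f^{\prime \prime }(\phi (t))=\frac{1}{a-mb}\frac{d}{dt}f^{\prime }(\phi (t))$ and integrate by parts in the normalizing integral with $g(t)=t-t^{2}$. Because $g(0)=g(1)=0$, the boundary term vanishes, and what remains is $-\frac{1}{a-mb}\int_{0}^{1}(1-2t)f^{\prime }(\phi (t))\,dt$. Then I repeat the procedure with the replacement $f^{\prime }(\phi (t))=\frac{1}{a-mb}\frac{d}{dt}f(\phi (t))$, integrating by parts once more with $g(t)=1-2t$. This second boundary term does not vanish: evaluating $(1-2t)f(\phi (t))$ at the endpoints contributes $-f(a)-f(mb)$, while the leftover integral produces $2\int_{0}^{1}f(\phi (t))\,dt$. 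Collecting the two factors of $\frac{1}{a-mb}$ leaves the expression $\frac{1}{(mb-a)^{2}}\big[f(a)+f(mb)-2\int_{0}^{1}f(\phi (t))\,dt\big]$.

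Finally I would handle the remaining integral by the substitution $x=\phi (t)$, with $dx=(a-mb)\,dt$, which sends $t=0,1$ to $x=mb,a$ respectively; reversing the orientation of the limits gives $\int_{0}^{1}f(\phi (t))\,dt=\frac{1}{mb-a}\int_{a}^{mb}f(x)\,dx$. Substituting this back and multiplying through by $\frac{(mb-a)^{2}}{2}$ reproduces $\frac{f(a)+f(mb)}{2}-\frac{1}{mb-a}\int_{a}^{mb}f(x)\,dx$, which is the claimed left-hand side.

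The one genuine point of care, and the main bookkeeping obstacle, is the sign arising from $a-mb=-(mb-a)$: it appears to various powers after each integration by parts and once more (together with the flipping of the integration limits) in the change of variables. I must verify that these signs and the endpoint evaluations combine to give precisely the positive factor $\frac{(mb-a)^{2}}{2}$ and the correctly signed combination $f(a)+f(mb)$; the rest of the computation is routine.
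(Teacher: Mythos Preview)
Your proposal is correct and follows precisely the approach the paper itself indicates: the paper does not write out a proof but merely remarks that the identity follows by integrating by parts on the right-hand side and then changing the variable, leaving the details to the reader. Your two integrations by parts and the substitution $x=\phi(t)$ supply exactly those details, and the sign bookkeeping you flag indeed resolves cleanly since $(a-mb)^2=(mb-a)^2$.
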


A simple proof of the equality can be done by performing an integration by
parts in the integrals from the right side and changing the variable. The
details are left to the interested reader.

\begin{theorem}
\label{teo 2.1} Let $f:I\subset \lbrack 0,b^{\ast }]\rightarrow 
\mathbb{R}
$ be a differentiable mapping on $I^{\circ \text{ }}$ such that $f^{\prime
\prime }\in L[a,b]$ where $a,b$ $\in I$ with $a<b,$ $b^{\ast }>0.$ If $%
\left\vert f^{\prime \prime }\right\vert ^{q}$ is $(\alpha ,m)-$convex on $%
[a,b]$ for $(\alpha ,m)\in \lbrack 0,1]^{2},$ $q\geq 1,$ then the following
inequality holds:%
\begin{eqnarray*}
&&\left\vert \frac{f(a)+f(mb)}{2}-\frac{1}{mb-a}\int_{a}^{mb}f(x)dx\right%
\vert \\
&\leq &\frac{\left( mb-a\right) ^{2}}{2}\left( \frac{1}{6}\right) ^{1-\frac{1%
}{q}} \\
&&\times \left[ \left\vert f^{\prime \prime }(a)\right\vert ^{q}\frac{1}{%
\left( \alpha +2\right) \left( \alpha +3\right) }+m\left\vert f^{\prime
\prime }(b)\right\vert ^{q}\left( \frac{1}{6}-\frac{1}{\left( \alpha
+2\right) \left( \alpha +3\right) }\right) \right] ^{\frac{1}{q}}.
\end{eqnarray*}
\end{theorem}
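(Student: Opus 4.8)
The plan is to start from the identity in Lemma \ref{lemma 2.1}, pass to absolute values, and then apply the power-mean integral inequality together with the $(\alpha,m)$-convexity hypothesis on $\left\vert f^{\prime \prime }\right\vert ^{q}$. The crucial observation that makes the weight clean is that the kernel $t-t^{2}=t(1-t)$ is nonnegative on $[0,1]$, so taking absolute values in Lemma \ref{lemma 2.1} yields
\begin{equation*}
\left\vert \frac{f(a)+f(mb)}{2}-\frac{1}{mb-a}\int_{a}^{mb}f(x)dx\right\vert \leq \frac{\left( mb-a\right) ^{2}}{2}\int_{0}^{1}\left( t-t^{2}\right) \left\vert f^{\prime \prime }(ta+m(1-t)b)\right\vert dt.
\end{equation*}

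The next step is to handle the remaining integral. Since $q\geq 1$, I would invoke the power-mean inequality with the nonnegative weight $g(t)=t-t^{2}$, writing $\int_{0}^{1}g\,\left\vert f^{\prime \prime }\right\vert \leq \left( \int_{0}^{1}g\right) ^{1-\frac{1}{q}}\left( \int_{0}^{1}g\,\left\vert f^{\prime \prime }\right\vert ^{q}\right) ^{\frac{1}{q}}$. Because $\int_{0}^{1}\left( t-t^{2}\right) dt=\frac{1}{2}-\frac{1}{3}=\frac{1}{6}$, the first factor contributes precisely the constant $\left( \frac{1}{6}\right) ^{1-\frac{1}{q}}$ appearing in the statement. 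The case $q=1$ is subsumed here, the power-mean factor then being $1$.

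It remains to estimate $\int_{0}^{1}\left( t-t^{2}\right) \left\vert f^{\prime \prime }(ta+m(1-t)b)\right\vert ^{q}dt$. Here I would use that the argument $ta+m(1-t)b$ is exactly of the form appearing in the definition of $(\alpha,m)$-convexity with $x=a$, $y=b$, so that
\begin{equation*}
\left\vert f^{\prime \prime }(ta+m(1-t)b)\right\vert ^{q}\leq t^{\alpha }\left\vert f^{\prime \prime }(a)\right\vert ^{q}+m\left( 1-t^{\alpha }\right) \left\vert f^{\prime \prime }(b)\right\vert ^{q}.
\end{equation*}
Multiplying by $t-t^{2}$ and integrating term by term reduces everything to two elementary Beta-type integrals: $\int_{0}^{1}\left( t-t^{2}\right) t^{\alpha }dt=\frac{1}{\alpha +2}-\frac{1}{\alpha +3}=\frac{1}{\left( \alpha +2\right) \left( \alpha +3\right) }$, and consequently $\int_{0}^{1}\left( t-t^{2}\right) \left( 1-t^{\alpha }\right) dt=\frac{1}{6}-\frac{1}{\left( \alpha +2\right) \left( \alpha +3\right) }$. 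Substituting these two values back and collecting the bracket to the power $\frac{1}{q}$ produces exactly the claimed bound.

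I do not expect any genuine obstacle in this argument; it is a routine combination of a known identity with two standard inequalities. The only points requiring care are bookkeeping ones: correctly matching the affine argument $ta+m(1-t)b$ to the roles of $x$ and $y$ in the $(\alpha,m)$-convexity definition (so that the $t^{\alpha}$ weight attaches to $\left\vert f^{\prime \prime }(a)\right\vert ^{q}$ and the factor $m(1-t^{\alpha})$ to $\left\vert f^{\prime \prime }(b)\right\vert ^{q}$), and evaluating the two moment integrals correctly so that the second coefficient appears in the subtracted form $\frac{1}{6}-\frac{1}{\left( \alpha +2\right) \left( \alpha +3\right) }$ rather than as an independent Beta integral. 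Verifying that this second coefficient is nonnegative for all $\alpha\in[0,1]$ (which it is, since $\left( \alpha +2\right) \left( \alpha +3\right) \geq 6$) confirms the estimate is well posed.
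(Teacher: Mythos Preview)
Your proposal is correct and follows essentially the same route as the paper: start from Lemma~\ref{lemma 2.1}, take absolute values, apply the power-mean (equivalently, weighted H\"older) inequality with weight $t-t^{2}$ to extract the factor $\left(\tfrac{1}{6}\right)^{1-\frac{1}{q}}$, and then use the $(\alpha,m)$-convexity of $|f''|^{q}$ together with the moment computations $\int_{0}^{1}(t-t^{2})t^{\alpha}\,dt=\frac{1}{(\alpha+2)(\alpha+3)}$. The only cosmetic difference is that the paper treats the case $q=1$ separately before applying H\"older for $q>1$, whereas you correctly observe that the power-mean inequality subsumes $q=1$.
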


\begin{proof}
Suppose that $q=1.$ From Lemma \ref{lemma 2.1} and using the $(\alpha ,m)-$%
convexity of $\left\vert f^{\prime \prime }\right\vert $ we have%
\begin{eqnarray}
&&\left\vert \frac{f(a)+f(mb)}{2}-\frac{1}{mb-a}\int_{a}^{mb}f(x)dx\right%
\vert  \label{1} \\
&\leq &\frac{\left( mb-a\right) ^{2}}{2}\int_{0}^{1}\left( t-t^{2}\right)
\left\vert f^{\prime \prime }(ta+m(1-t)b)\right\vert dt  \notag \\
&\leq &\frac{\left( mb-a\right) ^{2}}{2}\int_{0}^{1}\left( t-t^{2}\right) %
\left[ t^{\alpha }\left\vert f(a)\right\vert +m(1-t^{\alpha })\left\vert
f(b)\right\vert \right] dt  \notag \\
&=&\frac{\left( mb-a\right) ^{2}}{2}\left[ \left\vert f^{\prime \prime
}(a)\right\vert \frac{1}{\left( \alpha +2\right) \left( \alpha +3\right) }%
+m\left\vert f^{\prime \prime }(a)\right\vert \left( \frac{1}{6}-\frac{1}{%
\left( \alpha +2\right) \left( \alpha +3\right) }\right) \right]  \notag
\end{eqnarray}%
which completes the proof for $q=1.$

Suppose now that $q>1.$ From Lemma \ref{lemma 2.1} and using the H\"{o}%
lder's integral inequality for $q>1,$ we have%
\begin{eqnarray}
&&\int_{0}^{1}\left( t-t^{2}\right) \left\vert f^{\prime \prime
}(ta+m(1-t)b)\right\vert dt  \notag \\
&=&\int_{0}^{1}\left( t-t^{2}\right) ^{1-\frac{1}{q}}\left( t-t^{2}\right) ^{%
\frac{1}{q}}\left\vert f^{\prime \prime }(ta+m(1-t)b)\right\vert dt  \notag
\\
&\leq &\left[ \int_{0}^{1}\left( t-t^{2}\right) dt\right] ^{1-\frac{1}{q}}%
\left[ \int_{0}^{1}\left( t-t^{2}\right) \left\vert f^{\prime \prime
}(ta+m(1-t)b)\right\vert ^{q}dt\right] ^{\frac{1}{q}}  \label{2}
\end{eqnarray}%
where $\frac{1}{p}+\frac{1}{q}=1.$

Since $\left\vert f^{\prime \prime }\right\vert ^{q}$ is $(\alpha ,m)-$
convex on $[a,b]$ we know that for every $t\in \lbrack 0,1]$%
\begin{equation}
\left\vert f^{\prime \prime }(ta+m(1-t)b)\right\vert ^{q}\leq t^{\alpha
}\left\vert f^{\prime \prime }(a)\right\vert ^{q}+m\left( 1-t^{\alpha
}\right) \left\vert f^{\prime \prime }(b)\right\vert ^{q}.  \label{3}
\end{equation}%
From (\ref{1})-(\ref{3}) we have%
\begin{eqnarray*}
&&\left\vert \frac{f(a)+f(mb)}{2}-\frac{1}{mb-a}\int_{a}^{mb}f(x)dx\right%
\vert \\
&\leq &\frac{\left( mb-a\right) ^{2}}{2}\left[ \int_{0}^{1}\left(
t-t^{2}\right) dt\right] ^{1-\frac{1}{q}}\left[ \int_{0}^{1}\left(
t-t^{2}\right) \left\vert f^{\prime \prime }(ta+m(1-t)b)\right\vert ^{q}dt%
\right] ^{\frac{1}{q}} \\
&\leq &\frac{\left( mb-a\right) ^{2}}{2}\left[ \int_{0}^{1}\left(
t-t^{2}\right) dt\right] ^{1-\frac{1}{q}} \\
&&\times \left[ \int_{0}^{1}\left( t-t^{2}\right) \left[ t^{\alpha
}\left\vert f^{\prime \prime }(a)\right\vert ^{q}+m\left( 1-t^{\alpha
}\right) \left\vert f^{\prime \prime }(b)\right\vert ^{q}\right] dt\right] ^{%
\frac{1}{q}} \\
&=&\frac{\left( mb-a\right) ^{2}}{2}\left( \frac{1}{6}\right) ^{1-\frac{1}{q}%
} \\
&&\times \left[ \left\vert f^{\prime \prime }(a)\right\vert ^{q}\frac{1}{%
\left( \alpha +2\right) \left( \alpha +3\right) }+m\left\vert f^{\prime
\prime }(b)\right\vert ^{q}\left( \frac{1}{6}-\frac{1}{\left( \alpha
+2\right) \left( \alpha +3\right) }\right) \right] ^{\frac{1}{q}}
\end{eqnarray*}%
which is the required.
\end{proof}

\begin{remark}
\label{rem 2.1} If in Theorem \ref{teo 2.1} we choose $m=\alpha =q=1,$ we
obtain 
\begin{equation*}
\left\vert \frac{f(a)+f(b)}{2}-\frac{1}{b-a}\int_{a}^{b}f(x)dx\right\vert
\leq \frac{\left( b-a\right) ^{2}}{24}\left[ \left\vert f^{\prime \prime
}(a)\right\vert +\left\vert f^{\prime \prime }(b)\right\vert \right]
\end{equation*}%
which is the inequality in (\ref{0}).
\end{remark}

\begin{theorem}
\label{teo 2.2} With the assumptions of Theorem \ref{teo 2.1} the following
inequality holds:%
\begin{eqnarray*}
&&\left\vert \frac{f(a)+f(mb)}{2}-\frac{1}{mb-a}\int_{a}^{mb}f(x)dx\right%
\vert \\
&\leq &\frac{\left( mb-a\right) ^{2}}{8}\left( \frac{\Gamma \left(
1+p\right) }{\Gamma \left( \frac{3}{2}+p\right) }\right) ^{\frac{1}{p}%
}\left( \left\vert f^{\prime \prime }(a)\right\vert ^{q}\frac{1}{\alpha +1}%
+m\left\vert f^{\prime \prime }(b)\right\vert ^{q}\left( \frac{\alpha }{%
\alpha +1}\right) \right) ^{\frac{1}{q}}.
\end{eqnarray*}
\end{theorem}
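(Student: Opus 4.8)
The plan is to follow the same opening as the proof of Theorem \ref{teo 2.1}, but to replace the power-mean step by a direct application of H\"{o}lder's inequality. Starting from Lemma \ref{lemma 2.1} and passing to absolute values gives, exactly as in (\ref{1}),
\[
\left\vert \frac{f(a)+f(mb)}{2}-\frac{1}{mb-a}\int_{a}^{mb}f(x)\,dx\right\vert \leq \frac{\left( mb-a\right) ^{2}}{2}\int_{0}^{1}\left( t-t^{2}\right) \left\vert f^{\prime \prime }(ta+m(1-t)b)\right\vert dt .
\]
Here, instead of writing $\left( t-t^{2}\right) =\left( t-t^{2}\right) ^{1-\frac{1}{q}}\left( t-t^{2}\right) ^{\frac{1}{q}}$ as in Theorem \ref{teo 2.1}, I would keep the weight intact and split the integrand as $\left( t-t^{2}\right) \cdot \left\vert f^{\prime \prime }(ta+m(1-t)b)\right\vert $, applying H\"{o}lder's inequality with exponents $p,q$ satisfying $\frac{1}{p}+\frac{1}{q}=1$ (so $q>1$ and $p=\frac{q}{q-1}$). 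This produces the product of $\left( \int_{0}^{1}\left( t-t^{2}\right) ^{p}\,dt\right) ^{1/p}$ and $\left( \int_{0}^{1}\left\vert f^{\prime \prime }(ta+m(1-t)b)\right\vert ^{q}\,dt\right) ^{1/q}$.

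For the second factor I would insert the $(\alpha ,m)-$convexity estimate (\ref{3}), namely $\left\vert f^{\prime \prime }(ta+m(1-t)b)\right\vert ^{q}\leq t^{\alpha }\left\vert f^{\prime \prime }(a)\right\vert ^{q}+m(1-t^{\alpha })\left\vert f^{\prime \prime }(b)\right\vert ^{q}$, and integrate termwise. Using $\int_{0}^{1}t^{\alpha }\,dt=\frac{1}{\alpha +1}$ and $\int_{0}^{1}(1-t^{\alpha })\,dt=\frac{\alpha }{\alpha +1}$, this factor becomes exactly $\left( \left\vert f^{\prime \prime }(a)\right\vert ^{q}\frac{1}{\alpha +1}+m\left\vert f^{\prime \prime }(b)\right\vert ^{q}\frac{\alpha }{\alpha +1}\right) ^{1/q}$, which already coincides with the corresponding factor in the claim. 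This step is entirely routine.

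The part requiring care is the first factor, $\left( \int_{0}^{1}\left( t-t^{2}\right) ^{p}\,dt\right) ^{1/p}$, which is where the Gamma functions enter. I would evaluate $\int_{0}^{1}\left( t-t^{2}\right) ^{p}\,dt=\int_{0}^{1}\left[ t(1-t)\right] ^{p}\,dt=B(p+1,p+1)=\frac{\Gamma (p+1)^{2}}{\Gamma (2p+2)}$, and then invoke Legendre's duplication formula $\Gamma (2p+2)=\frac{2^{2p+1}}{\sqrt{\pi }}\,\Gamma (p+1)\,\Gamma \!\left( p+\frac{3}{2}\right) $ to rewrite this as $\frac{\sqrt{\pi }}{2}\cdot \frac{1}{4^{p}}\cdot \frac{\Gamma (1+p)}{\Gamma \!\left( \frac{3}{2}+p\right) }$. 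Raising to the power $\frac{1}{p}$ and multiplying by $\frac{\left( mb-a\right) ^{2}}{2}$ then yields the prefactor $\frac{\left( mb-a\right) ^{2}}{8}\left( \frac{\Gamma (1+p)}{\Gamma \!\left( \frac{3}{2}+p\right) }\right) ^{1/p}$, up to the constant $\left( \frac{\sqrt{\pi }}{2}\right) ^{1/p}$; since $\frac{\sqrt{\pi }}{2}<1$ this extra factor is less than one and may be discarded to arrive at the stated (slightly looser) upper bound. Thus the only genuine obstacle is this Beta-function evaluation together with the duplication identity needed to cast it in the displayed Gamma form; combining the two factors then completes the argument.
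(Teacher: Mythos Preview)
Your proposal is correct and follows essentially the same route as the paper: start from Lemma \ref{lemma 2.1}, apply H\"{o}lder with exponents $p,q$ to produce the factors $\left(\int_0^1 (t-t^2)^p\,dt\right)^{1/p}$ and $\left(\int_0^1 |f''|^q\,dt\right)^{1/q}$, use $(\alpha,m)$-convexity on the second, evaluate the first via the Beta function, and then discard the factor $(\sqrt{\pi}/2)^{1/p}<1$. The only cosmetic difference is that the paper reaches $B(p+1,p+1)=2^{-1-2p}\sqrt{\pi}\,\Gamma(1+p)/\Gamma(\tfrac{3}{2}+p)$ via the identity $\beta(x,x)=2^{1-2x}\beta(\tfrac{1}{2},x)$, whereas you obtain the same expression through Legendre's duplication formula; these are equivalent.
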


\begin{proof}
From Lemma \ref{lemma 2.1} and using the H\"{o}lder's integral inequality,
we obtain%
\begin{eqnarray*}
&&\left\vert \frac{f(a)+f(mb)}{2}-\frac{1}{mb-a}\int_{a}^{mb}f(x)dx\right%
\vert \\
&\leq &\frac{\left( mb-a\right) ^{2}}{2}\int_{0}^{1}t(1-t)\left\vert
f^{\prime \prime }(ta+m(1-t)b)\right\vert dt \\
&\leq &\frac{\left( mb-a\right) ^{2}}{2}\left( \int_{0}^{1}\left(
t-t^{2}\right) ^{p}dt\right) ^{\frac{1}{p}}\left( \int_{0}^{1}\left\vert
f^{\prime \prime }(ta+m(1-t)b)\right\vert ^{q}dt\right) ^{\frac{1}{q}} \\
&\leq &\frac{\left( mb-a\right) ^{2}}{2}\left( \int_{0}^{1}\left(
t-t^{2}\right) ^{p}dt\right) ^{\frac{1}{p}}\left( \int_{0}^{1}\left[
t^{\alpha }\left\vert f^{\prime \prime }(a)\right\vert ^{q}+m\left(
1-t^{\alpha }\right) \left\vert f^{\prime \prime }(b)\right\vert ^{q}\right]
dt\right) ^{\frac{1}{q}} \\
&=&\frac{\left( mb-a\right) ^{2}}{2}\left( \frac{2^{-1-2p}\sqrt{\pi }\Gamma
\left( 1+p\right) }{\Gamma \left( \frac{3}{2}+p\right) }\right) ^{\frac{1}{p}%
}\left( \left\vert f^{\prime \prime }(a)\right\vert ^{q}\frac{1}{\alpha +1}%
+m\left\vert f^{\prime \prime }(b)\right\vert ^{q}\left( \frac{\alpha }{%
\alpha +1}\right) \right) ^{\frac{1}{q}} \\
&=&\frac{\left( mb-a\right) ^{2}}{2}\frac{\left( \sqrt{\pi }\right) ^{\frac{1%
}{p}}}{2^{\frac{1}{p}}2^{2}}\left( \frac{\Gamma \left( 1+p\right) }{\Gamma
\left( \frac{3}{2}+p\right) }\right) ^{\frac{1}{p}}\left( \left\vert
f^{\prime \prime }(a)\right\vert ^{q}\frac{1}{\alpha +1}+m\left\vert
f^{\prime \prime }(b)\right\vert ^{q}\left( \frac{\alpha }{\alpha +1}\right)
\right) ^{\frac{1}{q}}
\end{eqnarray*}%
and since $\sqrt{\pi }<2,$ we have%
\begin{eqnarray*}
&&\left\vert \frac{f(a)+f(mb)}{2}-\frac{1}{mb-a}\int_{a}^{mb}f(x)dx\right%
\vert \\
&\leq &\frac{\left( mb-a\right) ^{2}}{8}\left( \frac{\Gamma \left(
1+p\right) }{\Gamma \left( \frac{3}{2}+p\right) }\right) ^{\frac{1}{p}%
}\left( \left\vert f^{\prime \prime }(a)\right\vert ^{q}\frac{1}{\alpha +1}%
+m\left\vert f^{\prime \prime }(b)\right\vert ^{q}\left( \frac{\alpha }{%
\alpha +1}\right) \right) ^{\frac{1}{q}},
\end{eqnarray*}%
where $\frac{1}{p}+\frac{1}{q}=1.$

We note that, the Beta and Gamma function (see \cite{GR}),%
\begin{equation*}
\beta \left( x,y\right) =\int_{0}^{1}t^{x-1}\left( 1-t\right) ^{y-1}dt,\text{
\ \ \ }x,y>0,\text{ \ \ \ \ \ }\Gamma (x)=\int_{0}^{\infty }e^{-t}t^{x-1}dt,%
\text{ \ \ \ }x>0,
\end{equation*}%
are used to evaluate the integral%
\begin{equation*}
\int_{0}^{1}\left( t-t^{2}\right) ^{p}dt=\int_{0}^{1}t^{p}\left( 1-t\right)
^{p}dt=\beta \left( p+1,p+1\right)
\end{equation*}%
where%
\begin{equation*}
\beta (x,x)=2^{1-2x}\beta (\frac{1}{2},x)\text{ \ \ \ and \ \ \ }\beta (x,y)=%
\frac{\Gamma (x)\Gamma (y)}{\Gamma (x+y)},\text{ thus we can obtain that}
\end{equation*}%
\begin{equation*}
\beta (p+1,p+1)=2^{1-2(p+1)}\beta (\frac{1}{2},p+1)=2^{1-2(p+1)}\frac{\Gamma
(\frac{1}{2})\Gamma (p+1)}{\Gamma (\frac{3}{2}+p)},
\end{equation*}%
and $\Gamma \left( \frac{1}{2}\right) =\sqrt{\pi },$ which completes the
proof.
\end{proof}

\begin{remark}
\label{rem 2.3} Suppose that all the assumptions of Theorem \ref{teo 2.2}
are satisfied with $\left\vert f^{\prime \prime }\right\vert \leq K.$ If we
choose $m=\alpha =1,$ we have the inequality in Corollary \ref{co 1.1}.
\end{remark}

\begin{theorem}
\label{teo 2.3} Let $f:I\subset \lbrack 0,b^{\ast }]\rightarrow 
\mathbb{R}
$ be a differentiable mapping on $I^{\circ \text{ }}$ such that $f^{\prime
\prime }\in L[a,b]$ where $a,b$ $\in I$ with $a<b,$ $b^{\ast }>0.$ If $%
\left\vert f^{\prime \prime }\right\vert ^{q}$ is $(\alpha ,m)-$ convex on $%
[a,b]$ for $(\alpha ,m)\in \lbrack 0,1]^{2},$ $q>1,$ $\frac{1}{p}+\frac{1}{q}%
=1,$ then the following inequality holds:%
\begin{eqnarray*}
&&\left\vert \frac{f(a)+f(mb)}{2}-\frac{1}{mb-a}\int_{a}^{mb}f(x)dx\right%
\vert \\
&\leq &\frac{\left( mb-a\right) ^{2}}{2} \\
&&\times \left( \left\vert f^{\prime \prime }(a)\right\vert ^{q}\beta \left(
\alpha +1,q+1\right) +m\left\vert f^{\prime \prime }(b)\right\vert
^{q}\left( \frac{1}{q+1}-\beta \left( \alpha +1,q+1\right) \right) \right) ^{%
\frac{1}{q}}.
\end{eqnarray*}
\end{theorem}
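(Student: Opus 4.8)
The plan is to follow the same template as the proofs of Theorems \ref{teo 2.1} and \ref{teo 2.2}: start from the identity in Lemma \ref{lemma 2.1}, pass to absolute values, and then apply H\"older's inequality with the conjugate exponents $p,q$. The one genuinely new decision is how to split the weight $t-t^{2}=t(1-t)$ between the two H\"older factors, and it is this choice that makes the Beta function $\beta(\alpha+1,q+1)$ surface in the final bound.

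First I would take the modulus of both sides of Lemma \ref{lemma 2.1} and pull the absolute value inside the integral, obtaining
\[
\left\vert \frac{f(a)+f(mb)}{2}-\frac{1}{mb-a}\int_{a}^{mb}f(x)dx\right\vert \le \frac{(mb-a)^{2}}{2}\int_{0}^{1}t(1-t)\left\vert f^{\prime\prime}(ta+m(1-t)b)\right\vert dt .
\]
Then I would regard the integrand as the product $t\cdot\bigl[(1-t)\left\vert f^{\prime\prime}\right\vert\bigr]$ and apply H\"older's inequality to this specific pairing, raising the factor $t$ to the power $p$ and the factor $(1-t)\left\vert f^{\prime\prime}\right\vert$ to the power $q$:
\[
\int_{0}^{1}t(1-t)\left\vert f^{\prime\prime}\right\vert dt \le \Bigl(\int_{0}^{1}t^{p}dt\Bigr)^{1/p}\Bigl(\int_{0}^{1}(1-t)^{q}\left\vert f^{\prime\prime}(ta+m(1-t)b)\right\vert^{q}dt\Bigr)^{1/q}.
\]
The key observation is that $\bigl(\int_{0}^{1}t^{p}dt\bigr)^{1/p}=\bigl(\tfrac{1}{p+1}\bigr)^{1/p}\le 1$ for every $p>1$, so this first factor may simply be discarded; this is precisely why no $p$-dependence survives in the stated bound even though H\"older is used.

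Next I would invoke the $(\alpha,m)$-convexity of $\left\vert f^{\prime\prime}\right\vert^{q}$ in the form (\ref{3}), namely $\left\vert f^{\prime\prime}(ta+m(1-t)b)\right\vert^{q}\le t^{\alpha}\left\vert f^{\prime\prime}(a)\right\vert^{q}+m(1-t^{\alpha})\left\vert f^{\prime\prime}(b)\right\vert^{q}$, and substitute it into the remaining $q$-integral. Expanding produces two elementary integrals carrying the weight $(1-t)^{q}$: the coefficient of $\left\vert f^{\prime\prime}(a)\right\vert^{q}$ is $\int_{0}^{1}t^{\alpha}(1-t)^{q}dt=\beta(\alpha+1,q+1)$ by the definition of the Beta function recalled in the proof of Theorem \ref{teo 2.2}, while the coefficient of $m\left\vert f^{\prime\prime}(b)\right\vert^{q}$ is $\int_{0}^{1}(1-t^{\alpha})(1-t)^{q}dt=\int_{0}^{1}(1-t)^{q}dt-\beta(\alpha+1,q+1)=\tfrac{1}{q+1}-\beta(\alpha+1,q+1)$, using $\int_{0}^{1}(1-t)^{q}dt=\beta(1,q+1)=\tfrac{1}{q+1}$. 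Collecting these coefficients and taking the $q$-th root yields exactly the asserted inequality.

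The only delicate point is the distribution of $t(1-t)$ across the H\"older factors: assigning $(1-t)$, rather than $t$, to the factor that carries $\left\vert f^{\prime\prime}\right\vert$ is what forces the weight $(1-t)^{q}$ and hence the Beta integral $\beta(\alpha+1,q+1)$; the opposite assignment would instead give $\int_{0}^{1}t^{q}t^{\alpha}dt=\tfrac{1}{\alpha+q+1}$ and a structurally different estimate. Everything after that decision is a routine Beta/Gamma evaluation together with the harmless simplification $\bigl(\tfrac{1}{p+1}\bigr)^{1/p}\le 1$.
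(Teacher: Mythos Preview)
Your proposal is correct and follows essentially the same approach as the paper: start from Lemma \ref{lemma 2.1}, apply H\"older with the split $t\cdot\bigl[(1-t)\left\vert f^{\prime\prime}\right\vert\bigr]$, use the $(\alpha,m)$-convexity of $\left\vert f^{\prime\prime}\right\vert^{q}$, identify the resulting integrals as $\beta(\alpha+1,q+1)$ and $\tfrac{1}{q+1}-\beta(\alpha+1,q+1)$, and then discard the factor $\bigl(\tfrac{1}{p+1}\bigr)^{1/p}\le 1$. The paper in fact records the sharper two-sided estimate $\tfrac{1}{2}<\bigl(\tfrac{1}{p+1}\bigr)^{1/p}<1$, but only the upper bound is used, exactly as you do.
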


\begin{proof}
From Lemma \ref{lemma 2.1} and using the well known H\"{o}lder's integral
inequality we have%
\begin{eqnarray*}
&&\left\vert \frac{f(a)+f(mb)}{2}-\frac{1}{mb-a}\int_{a}^{mb}f(x)dx\right%
\vert \\
&\leq &\frac{\left( mb-a\right) ^{2}}{2}\int_{0}^{1}t(1-t)\left\vert
f^{\prime \prime }(ta+m(1-t)b)\right\vert dt \\
&\leq &\frac{\left( mb-a\right) ^{2}}{2}\left( \int_{0}^{1}t^{p}dt\right) ^{%
\frac{1}{p}}\left( \int_{0}^{1}\left( 1-t\right) ^{q}\left\vert f^{\prime
\prime }(ta+m(1-t)b)\right\vert ^{q}dt\right) ^{\frac{1}{q}} \\
&\leq &\frac{\left( mb-a\right) ^{2}}{2}\left( \int_{0}^{1}t^{p}dt\right) ^{%
\frac{1}{p}} \\
&&\times \left( \int_{0}^{1}\left( 1-t\right) ^{q}\left[ t^{\alpha
}\left\vert f^{\prime \prime }(a)\right\vert ^{q}+m\left( 1-t^{\alpha
}\right) \left\vert f^{\prime \prime }(b)\right\vert ^{q}\right] dt\right) ^{%
\frac{1}{q}} \\
&=&\frac{\left( mb-a\right) ^{2}}{2}\left( \frac{1}{p+1}\right) ^{\frac{1}{p}%
} \\
&&\times \left( \left\vert f^{\prime \prime }(a)\right\vert ^{q}\beta \left(
\alpha +1,q+1\right) +m\left\vert f^{\prime \prime }(b)\right\vert
^{q}\left( \frac{1}{q+1}-\beta \left( \alpha +1,q+1\right) \right) \right) ^{%
\frac{1}{q}}.
\end{eqnarray*}%
Since $\frac{1}{2}<\left( \frac{1}{p+1}\right) ^{\frac{1}{p}}<1,$ we obtain%
\begin{eqnarray*}
&&\left\vert \frac{f(a)+f(mb)}{2}-\frac{1}{mb-a}\int_{a}^{mb}f(x)dx\right%
\vert \\
&\leq &\frac{\left( mb-a\right) ^{2}}{2}\left( \left\vert f^{\prime \prime
}(a)\right\vert ^{q}\beta \left( \alpha +1,q+1\right) +m\left\vert f^{\prime
\prime }(b)\right\vert ^{q}\left( \frac{1}{q+1}-\beta \left( \alpha
+1,q+1\right) \right) \right) ^{\frac{1}{q}}.
\end{eqnarray*}
\end{proof}

\begin{theorem}
\label{teo 2.4} Let $f:I\subset \lbrack 0,b^{\ast }]\rightarrow 
\mathbb{R}
$ be a differentiable mapping on $I^{\circ \text{ }}$ such that $f^{\prime
\prime }\in L[a,b]$ where $a,b$ $\in I$ with $a<b,$ $b^{\ast }>0.$ If $%
\left\vert f^{\prime \prime }\right\vert ^{q}$ is $(\alpha ,m)-$ convex on $%
[a,b]$ for $(\alpha ,m)\in \lbrack 0,1]^{2},$ and for some fixed $q\geq 1,$
then the following inequality holds:%
\begin{eqnarray*}
&&\left\vert \frac{f(a)+f(mb)}{2}-\frac{1}{mb-a}\int_{a}^{mb}f(x)dx\right%
\vert \\
&\leq &\frac{\left( mb-a\right) ^{2}}{2}\left( \frac{1}{2}\right) ^{1-\frac{1%
}{q}} \\
&&\times \left( \left( \left\vert f^{\prime \prime }(a)\right\vert ^{q}\beta
\left( \alpha +2,q+1\right) +m\left\vert f^{\prime \prime }(b)\right\vert
^{q}\left( \frac{1}{\left( q+1\right) \left( q+2\right) }-\beta \left(
\alpha +2,q+1\right) \right) \right) \right) ^{\frac{1}{q}}.
\end{eqnarray*}
\end{theorem}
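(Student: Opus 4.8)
The plan is to follow the template already used for Theorems \ref{teo 2.1}--\ref{teo 2.3}: start from the identity in Lemma \ref{lemma 2.1}, apply a power-mean (H\"older) splitting of the factor $t(1-t)$, and then insert the $(\alpha ,m)-$convexity bound (\ref{3}). The only genuine decision is how to divide $t(1-t)$ between the two H\"older factors, and the target constant $\left( \frac{1}{2}\right) ^{1-\frac{1}{q}}$ together with the appearance of $\beta \left( \alpha +2,q+1\right) $ pins down the correct choice: unlike in Theorem \ref{teo 2.1}, the whole factor $(1-t)$ must be kept inside the $q$-th power rather than shared with the weight.

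First I would take absolute values in Lemma \ref{lemma 2.1}, obtaining
\[
\left\vert \frac{f(a)+f(mb)}{2}-\frac{1}{mb-a}\int_{a}^{mb}f(x)dx\right\vert \leq \frac{\left( mb-a\right) ^{2}}{2}\int_{0}^{1}t(1-t)\left\vert f^{\prime \prime }(ta+m(1-t)b)\right\vert dt.
\]
Next, writing the integrand as $t^{1-\frac{1}{q}}\cdot \left[ t^{\frac{1}{q}}(1-t)\left\vert f^{\prime \prime }\right\vert \right] $ and applying the power-mean inequality for $q\geq 1$ gives
\[
\int_{0}^{1}t(1-t)\left\vert f^{\prime \prime }\right\vert dt\leq \left( \int_{0}^{1}t\,dt\right) ^{1-\frac{1}{q}}\left( \int_{0}^{1}t(1-t)^{q}\left\vert f^{\prime \prime }(ta+m(1-t)b)\right\vert ^{q}dt\right) ^{\frac{1}{q}}.
\]
Since $\int_{0}^{1}t\,dt=\frac{1}{2}$, this is exactly what produces the constant $\left( \frac{1}{2}\right) ^{1-\frac{1}{q}}$ in the statement.

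Then I would substitute the $(\alpha ,m)-$convexity estimate (\ref{3}) into the remaining $q$-th power integral and split it into two Beta integrals. The coefficient of $\left\vert f^{\prime \prime }(a)\right\vert ^{q}$ is $\int_{0}^{1}t^{\alpha +1}(1-t)^{q}dt=\beta \left( \alpha +2,q+1\right) $, while the coefficient of $m\left\vert f^{\prime \prime }(b)\right\vert ^{q}$ is $\int_{0}^{1}t(1-t)^{q}dt-\int_{0}^{1}t^{\alpha +1}(1-t)^{q}dt=\beta \left( 2,q+1\right) -\beta \left( \alpha +2,q+1\right) $. Evaluating $\beta \left( 2,q+1\right) =\frac{\Gamma (2)\Gamma (q+1)}{\Gamma (q+3)}=\frac{1}{(q+1)(q+2)}$ yields precisely the bracketed expression, and restoring the factor $\frac{\left( mb-a\right) ^{2}}{2}$ finishes the argument.

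I do not expect a serious obstacle: once the splitting $t(1-t)=t^{1-1/q}\cdot t^{1/q}(1-t)$ is fixed, every integral is an elementary Beta integral and inequality (\ref{3}) is already available for $q\geq 1$. The only point meriting a word of care is the endpoint $q=1$: if one phrases the splitting through H\"older with conjugate exponent $p=\frac{q}{q-1}$, then $p$ is undefined there, but the power-mean inequality used above is valid for all $q\geq 1$ and degenerates to an equality at $q=1$ (the factor $t^{1-1/q}$ becomes $1$ and $\left( \frac{1}{2}\right) ^{0}=1$), so the bound reduces to inserting (\ref{3}) directly into the first display and the stated formula still holds---indeed it then coincides with the $q=1$ case of Theorem \ref{teo 2.1}.
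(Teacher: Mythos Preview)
Your proposal is correct and follows essentially the same route as the paper: start from Lemma \ref{lemma 2.1}, apply the power-mean inequality with weight $t$ (equivalently, your splitting $t(1-t)=t^{1-1/q}\cdot t^{1/q}(1-t)$) to produce the factor $\left(\tfrac{1}{2}\right)^{1-1/q}$, then insert the $(\alpha,m)$-convexity bound and evaluate the resulting Beta integrals. Your explicit treatment of the endpoint $q=1$ is a welcome clarification that the paper omits.
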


\begin{proof}
From Lemma \ref{lemma 2.1} and using the well-known power-mean integral
inequality we have%
\begin{eqnarray*}
&&\left\vert \frac{f(a)+f(mb)}{2}-\frac{1}{mb-a}\int_{a}^{mb}f(x)dx\right%
\vert \\
&\leq &\frac{\left( mb-a\right) ^{2}}{2}\int_{0}^{1}t(1-t)\left\vert
f^{\prime \prime }(ta+m(1-t)b)\right\vert dt \\
&\leq &\frac{\left( mb-a\right) ^{2}}{2}\left( \int_{0}^{1}tdt\right) ^{1-%
\frac{1}{q}}\left( \int_{0}^{1}t(1-t)^{q}\left\vert f^{\prime \prime
}(ta+m(1-t)b)\right\vert ^{q}dt\right) ^{\frac{1}{q}} \\
&\leq &\frac{\left( mb-a\right) ^{2}}{2}\left( \int_{0}^{1}tdt\right) ^{1-%
\frac{1}{q}}\left( \int_{0}^{1}t\left( 1-t\right) ^{q}\left[ t^{\alpha
}\left\vert f^{\prime \prime }(a)\right\vert ^{q}+m\left( 1-t^{\alpha
}\right) \left\vert f^{\prime \prime }(b)\right\vert ^{q}\right] dt\right) ^{%
\frac{1}{q}} \\
&=&\frac{\left( mb-a\right) ^{2}}{2}\left( \frac{1}{2}\right) ^{1-\frac{1}{q}%
} \\
&&\times \left( \left( \left\vert f^{\prime \prime }(a)\right\vert ^{q}\beta
\left( \alpha +2,q+1\right) +m\left\vert f^{\prime \prime }(b)\right\vert
^{q}\left( \frac{1}{\left( q+1\right) \left( q+2\right) }-\beta \left(
\alpha +2,q+1\right) \right) \right) \right) ^{\frac{1}{q}}
\end{eqnarray*}%
which completes the proof.
\end{proof}

\end{document}